\renewcommand*{\bar}[1]{\overline{#1}}
\DeclareMathOperator{\optup}{ub}
\DeclareMathOperator{\optlow}{lb}
\newcommand{\R}{\mathbb{R}}
\newcommand{\Z}{\mathbb{Z}}
\newtheorem{theorem}{Theorem}
\newtheorem{conjecture}{Conjecture}
\newtheorem{corollary}{Corollary}[theorem]
\newtheorem{lemma}{Lemma}
\theoremstyle{remark}
\theoremstyle{definition}
\author{Thomas Bellitto\affiliationmark{1}\thanks{This author is supported by the European Research Council (ERC) under the European Union’s Horizon 2020 research and innovation programme Grant Agreement 71470.}
  \and Arnaud P\^echer\affiliationmark{2}
  \and Antoine S\'edillot\affiliationmark{3}}
\title{On the density of sets of the Euclidean plane avoiding distance 1}
\affiliation{
  % one line per affiliation, no postal codes, grant numbers or similar
  Faculty of Mathematics, Informatics and Mechanics, University of Warsaw, Poland\\
  Université de Bordeaux, INRIA, LaBRI, France\\
  Université de Paris, France}
\keywords{Hadwiger-Nelson problem, unit-distance graphs, fractionnal chromatic number, weighted independence ratio, sets avoiding distance 1}
\begin{document}
\maketitle
\begin{abstract}
  A subset $A \subset \R^2$ is said to avoid distance $1$ if: $\forall x,y \in A, \left\| x-y \right\|_2 \neq 1.$ In this paper we study the number $m_1(\R^2)$ which is the supremum of the upper densities of measurable sets avoiding distance 1 in the Euclidean plane. Intuitively, $m_1(\R^2)$ represents the highest proportion of the plane that can be filled by a set avoiding distance 1. This parameter is related to the fractional chromatic number $\chi_f(\R^2)$ of the plane.
	
	We establish that $m_1(\R^2) \leq 0.25647$ and $\chi_f(\R^2) \geq 3.8991$.

\end{abstract}

\section{Introduction}

In the normed vector space $(\R^n,\left\| \cdot \right\|)$, a subset $A \in \R^n$ is said to \emph{avoid distance 1} if for all $x,y \in A$, we have $\left\| x-y\right\|$. In this work, we study $m_1(\R^n,\left\|\cdot\right\|)$ defined as the supremum of the upper densities of Lebesgue-measurable sets avoiding distance 1 (intuitively, the proportion of the space covered by such sets, a proper definition is given in Subsection \ref{sub:sets}). This number was introduced in \cite{LarmanRogers}. The aim of this work is to study the case of the Euclidean plane $(\R^2,\left\|\cdot\right\|_2)$: we denote $m_1(\R^2,\left\|\cdot\right\|_2)$ by $m_1(\R^2)$. 

A natural way to build a set avoiding distance 1 suitable for any normed space starts from a tiling of unit balls.
Let $\Lambda$ be a subset of $\R^n$ such that if $x \neq y$ are in $\Lambda$, the open unit balls $B(x,1)$ and $B(y,1)$ do not overlap. Then \[A = \bigcup_{\lambda \in \Lambda} B\left(\lambda,\frac{1}{2}\right),\] is a set avoiding distance $1$. The optimal density of a circle tiling in the Euclidean plane is about $0.9069$ and thus leads to the lower bound $m_1(\R^2)\geq 0.2267$ (see Figure \ref{fig:packing}):

\begin{figure}[!h]
\begin{minipage}{0.5\textwidth}
		\centering\includegraphics[width=0.5\linewidth]{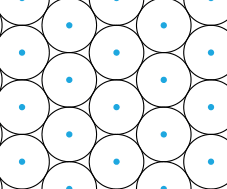}
		\caption{The points of $\Lambda$.}
% 		\end{figure}
\end{minipage}
\hfill
\begin{minipage}{0.5\textwidth}
% \begin{figure}[!h]
		\centering\includegraphics[width=0.5\linewidth]{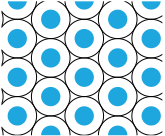}
		\caption{A set $A$ that avoids distance 1.}\label{fig:packing}
\end{minipage}
\end{figure}
% \begin{figure}[h]
% 	\begin{subfigure}{0.5\textwidth}
% 		\centering\includegraphics[width=0.5\linewidth]{PackingBoulesVidesCadre-1.png}
% 		\caption{The points of $\Lambda$.}
% 	\end{subfigure}
% 	\begin{subfigure}{0.5\textwidth}
% 		\centering\includegraphics[width=0.5\linewidth]{discs.png}
% 		\caption{A set $A$ that avoids distance 1.}\label{fig:packing}
% 	\end{subfigure}
% 	\caption{}
% \end{figure}

By improving this method, Croft managed to build a set of density $0.2293$ in \cite{Croft} which is the best lower bound known up to now.

Prior to our work, the best known upper bound was equal to $0.258795 \ldots$ and is due to  Keleti, Matolcsi, de Oliveira Filho and Ruzsa (see \cite{Keleti}). A long-term objective would be to prove a conjecture by Erdős presented in \cite{Szekely} that states that:

\begin{conjecture}
\label{conj:Erdos}
$$m_1(\R^2) < \frac{1}{4}.$$
\end{conjecture}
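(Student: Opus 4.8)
The plan is to push the linear-programming / averaging machinery behind the known upper bounds --- the $0.258795\dots$ of Keleti, Matolcsi, de~Oliveira~Filho and Ruzsa and the $0.25688$ of Ambrus and Matolcsi --- until it drops below $1/4$. The basic lever is this: if $A\subset\R^2$ is measurable, avoids distance $1$, and has upper density $\delta$, then for any finite point set $P=\{p_1,\dots,p_k\}$ carrying nonnegative weights $c_1,\dots,c_k$ together with a prescribed list of pairs at distance exactly $1$, averaging $\mathbf 1_A$ over all rigid motions of $P$ gives
\[
\delta\sum_{i}c_i \;\le\; \max\Bigl\{\,\sum_{i\in I}c_i \ :\ I \text{ an independent set of } P\,\Bigr\},
\]
since every congruent copy of $P$ meets $A$ in an independent set. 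Taking the infimum over all such weighted configurations yields an LP whose optimum bounds $m_1(\R^2)$ from above and is tied to the fractional chromatic number of the plane; in particular $m_1(\R^2)\le 1/\chi_f(\R^2)$, so it would suffice to certify $\chi_f(\R^2)>4$.

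First I would chase a \emph{finite} witness: a unit-distance graph $G$ embeddable in $\R^2$ with independence ratio $\alpha(G)/|V(G)|<1/4$, equivalently $\chi_f(G)>4$, which by the inequality above forces $m_1(\R^2)<1/4$ at once. The Moser spindle only reaches $\chi_f=7/2$ and every small known example falls short, so this is a structured search: assemble large graphs by gluing rhombi, Moser spindles and the dense unit-distance gadgets appearing in de~Grey's $\chi(\R^2)\ge 5$ construction, then compute $\alpha(G)$ or $\chi_f(G)$ exactly by integer/linear programming on the (still tractable) vertex sets, and feed in valid inequalities coming from forbidden small sub-configurations. If no finite graph suffices, pass to the continuous program: optimize the averaging inequality over a \emph{measure} on unit-distance configurations --- equivalently over positive-definite test functions vanishing suitably on the unit-distance sphere and normalized at the origin, which is precisely the autocorrelation set-up of Keleti et al.\ dualized --- and solve the resulting infinite-dimensional LP numerically on a fine radial discretization, as Ambrus and Matolcsi did, but with the combinatorial inequalities from the first step added as cutting planes.

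The key step, and the one I expect to be the genuine obstacle, is producing a certificate that is simultaneously geometrically realizable in $\R^2$ with the prescribed distances \emph{and} strong enough to clear the remaining $\approx 6\%$ gap down to $1/4$. Empirically the harmonic-analysis bound appears to stabilize near $0.2569$ as the class of test functions is enriched, and no finite unit-distance graph with independence ratio below $1/4$ is known, so closing the gap almost certainly requires a new ingredient rather than a finer discretization: for instance a semidefinite tightening of the linear relaxation --- a Lov\'asz-$\vartheta$-type bound for the infinite unit-distance graph --- or an argument exploiting the measurability and rigidity of $A$ more strongly than a bare density count. I would therefore put most effort into the finite search and into SDP strengthenings, treating the Fourier computation as the benchmark to beat; absent such a new ingredient, the honest outcome is the improved-but-still-$>1/4$ bound $m_1(\R^2)\le 0.25646$ that the present paper establishes, and Conjecture~\ref{conj:Erdos} stays open.
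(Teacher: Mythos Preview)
The statement you were asked to prove is Conjecture~\ref{conj:Erdos}, and the paper does \emph{not} prove it: it is explicitly recorded as an open conjecture of Erd\H{o}s, and the paper's contribution is the weaker inequality $m_1(\R^2)\le 0.25646$ of Theorem~\ref{th:main}. There is therefore no ``paper's own proof'' to compare your proposal against.

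Your write-up is not a proof but a research programme, and you yourself acknowledge this in the final sentence: the approaches you outline (finite unit-distance witnesses with $\chi_f>4$, Fourier/autocorrelation LPs, SDP tightenings) are exactly the known lines of attack, and you correctly note that none of them is currently known to reach below $1/4$. So your assessment of the situation is accurate, but as a proof of the conjecture it has a genuine gap in the most literal sense --- the key step you flag (producing a certificate strong enough to clear the remaining gap to $1/4$) is precisely the missing idea, and nothing in the proposal supplies it. The honest conclusion you draw, that Conjecture~\ref{conj:Erdos} remains open and the paper's $0.25646$ is the state of the art, matches the paper exactly.
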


This problem has also been studied in higher dimensions. Moser, Larman and Rogers generalized Erdős' conjecture as follows: 
$$\forall n\geq 2 ,m_1(\R^n) < \frac{1}{2^n}.$$

A weaker result has been proved in \cite{Keleti}. A subset $A \subset \R^n$ has a block structure if it is the disjoint union $A = \bigcup_{i \in I} A_i$ such that $\left\| x-y\right\| < 1$ if $x$ and $y$ belongs to the same block and $\left\| x-y\right\| > 1$ otherwise. The authors have proved that a set avoiding distance 1 that has block structure has density strictly smaller than $\frac{1}{2^n}$.

For higher dimensions, as shown in \cite{Christine2015}, the best asymptotic bound is $\left(1 + o(1) \right)(1.2)^{-n}$, thus without any additional assumption, the current bounds are pretty far from the conjectured $2^{-n}.$

The special case of norms such that the unit ball tiles $\R^n$ by translation has also been studied. In this case, the previous method provides a set avoiding distance 1 of density $\frac{1}{2^n}$. 

% \begin{figure}[h]
% \centering\includegraphics[width=0.4\linewidth]{PackingHexa.pdf}
% \caption{The natural construction with hexagons}
% \label{fig:packinghexa}
% \end{figure}
Bachoc and Robins conjectured that this construction is optimal:

\begin{conjecture}[Bachoc, Robins]
Let $\left\| \cdot \right\|$ be a norm on $\R^n$ such that the unit ball tiles $\R^n$ by translation, then
$$m_1(\R^n, \left\| \cdot \right\|) = \frac{1}{2^n}.$$ 
\end{conjecture}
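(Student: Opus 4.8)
The equality is the conjunction of a lower and an upper bound. The lower bound $m_1(\R^n,\|\cdot\|)\ge 2^{-n}$ is precisely the construction recalled above: if the unit ball $B$ tiles $\R^n$ by translation then its translates form a packing of density $1$, so the half-balls form a packing of density $2^{-n}$ and the associated set $A_0$ avoids distance $1$ and has density $2^{-n}$. Thus the whole difficulty is the upper bound $m_1(\R^n,\|\cdot\|)\le 2^{-n}$, which is what I would try to prove. The first move is to reduce to a \emph{lattice} tiling: by the theorem of Venkov and McMullen, a convex body tiling $\R^n$ by translation also tiles it by a lattice $\Lambda$, and the unit ball of a norm is symmetric and convex, so we may fix such a $\Lambda$. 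Rescaling, $\tfrac12 B$ tiles $\R^n$ by $\tfrac12\Lambda$, and since $[\tfrac12\Lambda:\Lambda]=|\Lambda/2\Lambda|=2^n$, choosing coset representatives $\mu_1=0,\mu_2,\dots,\mu_{2^n}$ of $\tfrac12\Lambda/\Lambda$ gives a partition, up to a null set, $\R^n=\bigsqcup_{i=1}^{2^n}(A_0+\mu_i)$ into $2^n$ translates of the extremal set $A_0$.

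The heart of the argument would be the density/fractional-chromatic-number averaging bound. For a finite $V\subset\R^n$ let $G_V$ be its unit-distance graph in the norm (vertices $v,v'$ adjacent iff $\|v-v'\|=1$). If $A$ avoids distance $1$ and $w\colon V\to\R_{\ge0}$ is any weighting, then for every $x$ the set $\{v\in V:\ x-v\in A\}$ is independent in $G_V$, so $\sum_{v}w(v)\,\mathbf 1_A(x-v)\le\alpha_w(G_V)$, the maximum weight of an independent set; integrating this inequality over a large ball (a fixed translation changes the intersected volume only by an $O(R^{n-1})$ boundary term) gives $\overline d(A)\le\alpha_w(G_V)/\sum_{v}w(v)$, and optimising over $w$ yields $\overline d(A)\le 1/\chi_f(G_V)$. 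Hence the conjecture would follow as soon as one produces a finite $V$ with $\chi_f(G_V)\ge 2^n$. When $\|\cdot\|=\ell_\infty^n$ — and therefore, after a linear change of variables, for every norm whose unit ball is a parallelotope — this is immediate: $V=\{0,1\}^n$ is a clique of size $2^n$, since any two of its points lie at $\ell_\infty$-distance exactly $1$, so $\chi_f(G_V)\ge\omega(G_V)=2^n$; this recovers the known value $m_1(\R^n,\ell_\infty^n)=2^{-n}$.

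The hard part is everything outside this case, and I expect it to be the genuine obstacle. For a parallelohedron that is not a parallelotope there is no clique of size $2^n$ at all: by Petty's theorem the equilateral dimension of $(\R^n,\|\cdot\|)$ can be strictly smaller than $2^n$ — already for the regular hexagon in the plane it is $3$, not $4$ — and in particular the representatives $\mu_i$ above cannot then be chosen pairwise at norm-distance $1$. One is therefore forced to realise, isometrically inside the given norm, triangle-sparse unit-distance graphs whose fractional chromatic number approaches $2^n$, and to establish the matching lower bound $\chi_f\ge 2^n$ for them; controlling fractional chromatic numbers of such graphs while simultaneously respecting the metric of the parallelohedron is, I believe, exactly where the proof stalls, and it is plausibly why the conjecture is still open. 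A dual, harmonic-analytic attack — feeding a Fourier linear-programming bound (in the style of the autocorrelation method mentioned above) the periodic test function obtained from $\mathbf 1_{B/2}\ast\mathbf 1_{B/2}$, which is of positive type and vanishes on the unit sphere, together with Poisson summation over $\Lambda$, formally produces the value $2^{-n}$ — appears to hinge on the same combinatorial data attached to $\Lambda/2\Lambda$ and the geometry of $B$, and runs into the same wall.
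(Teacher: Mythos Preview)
The statement you are addressing is labelled a \emph{conjecture} in the paper, not a theorem: the paper does not prove it and explicitly presents it as open (only the planar case $n=2$ is subsequently stated as a theorem, with a reference to \cite{AD1,philippe} rather than a proof). There is therefore no ``paper's own proof'' to compare your attempt against.

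Your write-up is honest about this: you correctly separate the trivial lower bound $m_1\ge 2^{-n}$ (the half-ball packing, exactly the construction the paper recalls), reduce to a lattice tiling via Venkov--McMullen, recover the averaging inequality $\overline d(A)\le 1/\chi_f(G_V)$ (this is Lemma~\ref{lem:alphastar} of the paper), and settle the parallelotope case via the clique $\{0,1\}^n$. You then identify the genuine obstruction --- for a non-parallelotope parallelohedron the equilateral dimension is strictly below $2^n$, so no clique witnesses $\chi_f\ge 2^n$, and one must instead realise and analyse triangle-sparse unit-distance graphs of fractional chromatic number approaching $2^n$ inside the given norm --- and correctly flag this as the point where the argument stalls. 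That diagnosis is accurate and matches the state of the art; your proposal is not a proof of the conjecture, and you say so yourself.

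In short: there is no gap to name beyond the one you already name, and no alternative route in the paper to contrast with, because the paper does not claim a proof.
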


This conjecture is proved in dimension 2:

\begin{theorem}[\cite{AD1}, \cite{philippe}] 
Let $\left\| \cdot \right\|$ be a norm on $\R^2$ such that the unit ball tiles $\R^2$ by translation, then
$$m_1(\R^2, \left\| \cdot \right\|) = \frac{1}{4}.$$
\end{theorem}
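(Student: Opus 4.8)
We outline a proof; the lower bound is the construction from the introduction made quantitative, and the upper bound splits into an easy case and a hard one via the classification of planar translative tiles.

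\smallskip
\emph{The lower bound.} Let $B$ be the unit ball of $\|\cdot\|$ and let $\Lambda\subset\R^2$ realise the tiling, so $\R^2=\bigcup_{\lambda\in\Lambda}(\lambda+B)$ with pairwise disjoint interiors; since a convex body tiling the plane by translations tiles by a lattice, we may take $\Lambda$ to be a lattice, whence $\mathrm{covol}(\Lambda)=\mathrm{vol}(B)$. Every nonzero $\lambda\in\Lambda$ has $\|\lambda\|\ge 2$, for otherwise $\tfrac12\lambda$ would lie in the interiors of both tiles $B$ and $\lambda+B$. Hence $A=\bigcup_{\lambda\in\Lambda}\bigl(\lambda+\tfrac12B^\circ\bigr)$ avoids distance $1$: two points of one piece are at distance $<1$, and two points in pieces indexed by distinct $\lambda,\mu$ are at distance $>\|\lambda-\mu\|-\tfrac12-\tfrac12\ge 1$. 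The pieces are pairwise disjoint, each of volume $\tfrac14\mathrm{vol}(B)$, while the tiles $\lambda+B$ partition the plane, so $A$ has density $\tfrac14$ and $m_1(\R^2,\|\cdot\|)\ge\tfrac14$.

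\smallskip
\emph{Reduction and the parallelogram case.} Note that $m_1$ is invariant under invertible linear maps $T$: if $A$ avoids $\|\cdot\|$-distance $1$ then $T(A)$ avoids $\|T^{-1}(\cdot)\|$-distance $1$, upper densities are unchanged (all volumes scale by $|\det T|$), and the unit ball of $\|T^{-1}(\cdot)\|$ is $T(B)$, which still tiles by translations. By the classical theorem of Fedorov, a convex body tiling $\R^2$ by translations is a parallelogram or a centrally symmetric hexagon, so after a linear change of coordinates it suffices to treat (i) $B=[-1,1]^2$ (the $\ell^\infty$-norm) and (ii) $B$ a centrally symmetric hexagon. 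In case (i) the points $0,e_1,e_2,e_1+e_2$ are pairwise at $\ell^\infty$-distance $1$, so $\mathbf 1_A(x)+\mathbf 1_A(x+e_1)+\mathbf 1_A(x+e_2)+\mathbf 1_A(x+e_1+e_2)\le 1$ for all $x$; integrating over a large box and normalising gives $4\,\delta(A)\le 1$, where $\delta(A)$ is the upper density, hence $m_1\le\tfrac14$.

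\smallskip
\emph{The hexagon case.} Write $B=[-s_1,s_1]+[-s_2,s_2]+[-s_3,s_3]$ as a zonotope with $s_1,s_2,s_3$ pairwise independent, and let $\pm s_3$ govern one pair of parallel edges. The plan is to establish the local estimate $\mathrm{vol}\bigl(A\cap(x+B)\bigr)\le\tfrac14\mathrm{vol}(B)$ for every translate $x+B$ (this is checked directly when $B$ is a square), after which averaging over translates $t+B$ with $\int\mathbf 1_{t+B}(y)\,dt=\mathrm{vol}(B)$ yields $m_1\le\tfrac14$. Translating, we may take $x=0$ and must bound $\mathrm{vol}(A\cap B)$. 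Slicing $B$ by lines parallel to $s_3$, the restriction of $A$ to a generic slice is a subset of a segment avoiding a fixed distance $d$, of length $\le\min(L,d)$ for a slice of length $L\le 2d$ (a one-dimensional computation); and two slices one unit apart in the transverse direction have their $A$-parts separated by the chord in which a line parallel to $s_3$ meets $\partial B$. Combining the within-slice and between-slice constraints, while tracking how the slice length of the hexagon varies, should give the local estimate.

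\smallskip
\emph{Expected main obstacle.} In the parallelogram case one unit-distance $K_4$ does everything; for a genuine centrally symmetric hexagon no such $K_4$ exists (the unit-distance graph typically has clique number $3$), and the slice-by-slice bound alone is not tight, since individual slices can be ``half full''. The delicate point — the heart of the argument in \cite{AD1} and \cite{philippe} — is to exploit the correlation between neighbouring slices (a nearly full slice forces the adjacent ones to be sparse) so that the averaged contribution drops exactly to $\tfrac14\mathrm{vol}(B)$. I expect this interaction estimate to be the crux of the proof.
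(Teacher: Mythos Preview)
The paper does not prove this theorem; it is quoted from \cite{AD1} and \cite{philippe} as background in the introduction, with no argument supplied. So there is no in-paper proof to compare your attempt against.

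Judging your proposal on its own: the lower bound and the parallelogram ($\ell^\infty$) case are correct and standard. The hexagon case, however, is not a proof but a plan---you yourself say the slicing bounds ``should give the local estimate'' and that you ``expect'' the between-slice interaction to be ``the crux of the proof'', and then stop. Since the parallelogram case was known long before \cite{AD1} and \cite{philippe}, the hexagon case \emph{is} the content of the theorem, and that is precisely the part you have left undone.

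One further caution about the outlined strategy: your one-dimensional slice bound (a subset of a segment of length $L\le 2d$ avoiding distance $d$ has measure at most $\min(L,d)$) integrates over the hexagon to at most $\tfrac12\,\mathrm{vol}(B)$, not $\tfrac14\,\mathrm{vol}(B)$. Recovering the missing factor of two genuinely requires a two-dimensional argument, and for a proper centrally symmetric hexagon there is in general no unit-distance $K_4$ to supply it for free. So the gap sits exactly where the real difficulty of the theorem lies.
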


A possible approach to the study of $m_1(\R^2)$ uses graph theory, more precisely unit-distance graphs. Given a normed space $(\R^n,\left\| \cdot \right\|)$, a \emph{unit-distance graph} is a graph whose vertices are points of $\R^n$ and where two vertices are adjacent if and only if they are at distance exactly one.
% In this work, we use a discrete approach, by studying the class of \textit{unit-distance graphs}: graphs whose vertices are some subset of $\R^n$ and having as edges pair of vertices at distance exactly 1.

%For every finite unite-distance graph $G$, let $\alpha^{*}(G)$ the \textit{optimal weighted independence ratio} of $G$, defined as the inverse of the fractional chromatic number $\chi_f(G)$ of $G$: 

%Then $\alpha^{*}(G)$ yields an upper bound of  $m_1(\R^n,\left\| \cdot \right\|)$:

A set coloring of a graph $G = (V,E)$ is a function $c$ such that each vertex is associated to a set and if $(u,v) \in E, c(u) \cap c(v) = \emptyset$. An $(a:b)$-coloring of $G$ is a set coloring such that $\left|c(v)\right| \geq b $ for all vertex $v$ and $\left| \cup_{v \in V} c(v) \right| \leq a$.
Given an integer $b$, the $b$-fold chromatic number of $G$, $\chi_b(G)$, is the least integer $a$ such that there exists an $(a:b)$-coloring of $G$. The \textit{fractional chromatic number} of a graph $G$ (finite or not), $\chi_f(G)$, is then defined as 
$$\chi_f(G) = \inf_{b} \frac{\chi_b(G)}{b}.$$ %TODO mettre une remarque concernant l'existence de la borne inf? (très standard)
If we consider the particular case of the unit-distance graph of $\R^2$ whose vertices are all the points of $\R^2$, its fractional chormatic number, denoted by $\chi_f(\R^2)$ is called the \emph{fractional chromatic number of the plane}.

Prior to our work, the best known lower bound was

$$\frac{382}{102} = 3.745... \leq \chi_f(\R^2).$$%TODO donner le résultat exact

This lower bound is claimed by Exoo and Ismailescu in the yet unpublished paper \cite{ExooIsmailescu} by constructing a $72$-vertices unit-distance graph of fractional chromatic number $\frac{382}{102}$. The best published bound was $\frac {76}{21}=3.619...$ and was obtained in \cite{chifR2}.

If $G$ is a unit-distance graph of $(\R^n,\left\| \cdot \right\|)$, the following fundamental results holds (Lemma \ref{lemchif} and Lemma \ref{lem:alphastar}):
\begin{align}
  m_1(\R^n,\left\| \cdot \right\|) \leq \frac{1}{\chi_f(G)}.  
\end{align}

Furthermore, we obviously have
\begin{align}
  \chi_f(G) \leq  \chi_f(\R^n).
\end{align}

%Fractional coloring of metric spaces has also been widely studied. In the case of the Euclidean plane, the best known lower bound is

%$$\frac{382}{102} = 3.745... \leq \chi_f(\R^2)$$ %borne officielle 76/21≈3.6190

%This lower bound is claimed by Exoo and Ismailescu in the yet unpublished paper \cite{ExooIsmailescu} by constructing a $72$-vertices unit-distance graph of fractional chromatic number $\frac{382}{102}$. The best published bound is $\frac {76}{21}=3.619...$ and was obtained by Cranston and Rabern in \cite{chifR2}. %and the upper bound comes from Croft's construction which gives an independent set in $\R^{2}$ of density $0.2293...$. 

% The bound $m_1(\R^2) \leq 0.2587$ obtained in \cite{7} is better than the bound $m_1(\R^2) \leq \frac{102}{382} \simeq 0.2671$ that is obtained in \cite{14}, but the first one does not give a lower bound for $\chi_f(\R^2)$. 

In this work, we build a finite unit-distance graph with fractional chromatic number less than $1999983/512933\geq  3.8991$ (see Section \ref{sec:bound}), hence our main result is: %TODO modifier les bornes avec les nouvelles valeurs

\begin{theorem}
	\label{th:main}
\begin{eqnarray}
 m_1(\R^2) & \leq & 0.25647. \\
 3.8991. & \leq & \chi_f(\R^2). 
\end{eqnarray}

\end{theorem}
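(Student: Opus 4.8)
The plan is to derive both displayed inequalities from a single object: a finite unit-distance graph $G=(V,E)$ of $(\R^2,\|\cdot\|_2)$ together with a nonnegative weight function $w\colon V\to\R_{\ge 0}$ satisfying $\sum_{v\in S}w(v)\le 1$ for every independent set $S$ of $G$. Indeed, by linear-programming duality the supremum of $\sum_{v\in V}w(v)$ over all such $w$ equals $\chi_f(G)$, so any explicit $w$ with $\sum_{v\in V}w(v)=c$ certifies $\chi_f(G)\ge c$; choosing $G$ and $w$ so that $c$ is an explicit rational with $3.8992\le c$ and $1/c\le 0.25646$ yields $3.8992\le c\le\chi_f(G)\le\chi_f(\R^2)$ by the monotonicity inequality, and $m_1(\R^2)\le 1/\chi_f(G)\le 1/c\le 0.25646$ by Lemma \ref{lemchif} together with Lemma \ref{lem:alphastar} --- indeed the very same weighting $w$ is what drives the averaging argument behind that density bound. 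So the whole theorem reduces to producing one good pair $(G,w)$, which is carried out in Section \ref{sec:bound}.

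One natural route to the construction is to assemble $G$ from many congruent copies of a few small rigid unit-distance gadgets (rotated equilateral triangles, rhombi whose short diagonals are unit vectors, Moser-spindle-type pieces) anchored at a carefully chosen finite set of points, so that $G$ inherits a large automorphism group. Symmetry is then used twice: to take $w$ constant on the orbits of $\mathrm{Aut}(G)$, which collapses both the objective $\sum_{v\in V}w(v)$ and the feasibility check below to a small rational computation; and to describe $G$ hierarchically (gadget, block of gadgets, full graph), proving at each level the independent-set estimates one needs at the next.

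The delicate point --- and the main obstacle --- is checking that the proposed $w$ is feasible, i.e. that $\max\{\sum_{v\in S}w(v):S\subseteq V\text{ independent}\}\le 1$. This is a maximum-weight independent set problem on $G$, which has no efficient algorithm in general and here must be solved \emph{exactly}, in rational arithmetic, so that the final bound is fully rigorous. Just as hard is the design question underneath it: $G$ has to be intricate enough to push $\chi_f(G)$ above $3.8992$ --- well beyond the previously known $\frac{382}{102}=3.745\ldots$ --- yet small and structured enough that its weighted independence number can actually be pinned down. I would therefore interleave the construction and the analysis, using each level of the hierarchy to bound independent sets one level up, and finish with an exact, computer-assisted evaluation of the weighted independence number of the final graph, from which $\chi_f(G)$ is bounded below by the required rational constant, and hence both inequalities of the theorem follow.
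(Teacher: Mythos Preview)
Your reduction is correct and is exactly the paper's: exhibit a finite unit-distance graph $G$ together with a fractional clique $w$ (that is, $w\ge 0$ with $\sum_{v\in S}w(v)\le 1$ for every independent $S$) of total weight $c\ge 3.8992$, and read off both inequalities via Lemmas~\ref{lemchif} and~\ref{lem:alphastar}.

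Where your plan diverges is in how $(G,w)$ is actually produced and verified. You propose a designed, hierarchical build from small gadgets, with independent-set bounds proved level by level and a final computer check. The paper does none of that. It starts from a fixed candidate---de Grey's $301$-vertex graph $W$---and transforms it through a heuristic loop of \emph{reduction} (delete orbits carrying small optimal weight, justified by Lemma~\ref{lem:deletion}), \emph{spindling} around a vertex pair chosen by an auxiliary ILP, and six-fold \emph{circling} to restore symmetry. At each stage $\alpha^*(G)$ is computed exactly by Algorithm~\ref{algoowir}, which alternates a pure LP $P_1(\mathscr S)$ (optimize the symmetric weighting against a growing pool $\mathscr S$ of independent sets) with an ILP $P_2(w)$ (find a maximum-weight independent set for the current $w$ and add it to $\mathscr S$), terminating when the two bounds meet. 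No structural, by-hand independent-set analysis is attempted: the final $607$-vertex graph is too irregular and the target too tight for that, and the paper leans entirely on the ILP solver, accelerated by clique and Moser-spindle valid inequalities. Both routes yield the same kind of LP-dual certificate, but the paper's contribution is the search-and-verify machinery and the specific graph it outputs, not a gadget-based construction; your hierarchical angle is plausible in principle but would almost certainly have to collapse into the same computational search to reach $3.8992$.
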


which improves the previous best bounds from \cite{Keleti} and \cite{chifR2}.

The standard algorithm to compute the fractional chromatic number of a finite graph is a pure linear program with as many constraints as maximal stable sets of the graph. It is not efficient in practice for the unit-distance graphs that we study in this work, as they have exponentially many maximal stable sets.

To overcome this difficulty, we introduce an algorithm combining iteratively a mixed integer linear program and a pure linear program, which turned out to be practical for unit-distance graphs with up to 600 vertices.

%plan précis à faire
This paper is organized as follows: 
\begin{itemize}
	\item Section \ref{sec:prelim} contains preliminaries;
	\item in Section \ref{sec:algo}, we give our algorithm to compute the optimal weighted independence ratio of any finite graph, establish its correctness and exhibit some optimizations for unit-distance graphs;
	\item in Section \ref{sec:proof}, we exhibit a graph yielding the bound of Theorem \ref{th:main}, and explain how we constructed it.
\end{itemize}

\section{Preliminaries}
\label{sec:prelim}

In this section, we recall some basic graph terminology and introduce a key graph parameter for this work: the optimal weighted independence ratio (subsection \ref{sub:chif}).
We formally define the parameter $m_1(\R^n,||.||)$ for any norm and establish that it is upper bounded by the optimal weighted independence ratio of any unit-distance graph (subsection \ref{sub:sets}).

\subsection{Fractional chromatic number and optimal weighted independence ratio}
\label{sub:chif}

Let $G = (V,E)$ be a graph. An independent set (resp. clique) of $G$ is a set of pairwise non-adjacent (resp. adjacent) vertices of $G$. A maximal independent set (resp. maximal clique) is an independent set which is not properly contained in an other one. We denote by $\mathcal{I}(G)$ be the set of all independent sets in $G$. It is well known that the fractional chromatic number of $G$, $\chi_f(G)$, is the solution of the linear program
\begin{equation}
\label{eq:fractlinprog}
\begin{array}{ll@{}ll}
\text{minimize}  & \displaystyle\sum\limits_{I \in \mathcal{I}(G)} x_I, &\\
\text{subject to}& \displaystyle\sum\limits_{I \in \mathcal{I}(G), y \in I} & x_I \geq 1, & \forall I \in \mathcal{I}(G),\\
\text{and}       &                                                &x_I \geq 0, &\forall I \in \mathcal{I}(G).
\end{array}
\end{equation}

%\begin{align}
%\label{eq:fractlinprog}
%\text{minimize }  & \displaystyle\sum\limits_{I \in \mathcal{I}(G)} x_I, \nonumber \\
%\text{subject to }& \displaystyle\sum\limits_{I \in \mathcal{I}(G), y \in I} x_I \geq 1, \forall I \in \mathcal{I}(G),\\
%\text{and }       & x_I \geq 0, \forall I \in \mathcal{I}(G). \nonumber
%\end{align}

A \emph{weighted graph} is a triplet $(V,E,w)$ where $(V,E)$ is a graph and $w : V \rightarrow \R_{+}$ is a not identically 0 function on the vertices called a \emph{weight distribution}. We denote by $W$ the \emph{set of weight distributions} on $G = (V,E)$, and for $v \in V$, $w(v)$ is called the \emph{weight} of the vertex $v$. The weight of a vertex set $A \subset V$ is naturally defined as $w(A) = \sum_{v \in A}w(v)$ and $w(V)$ is called the \emph{total weight} of the graph.

The \emph{weighted independence number} of a finite weighted graph $G_w$ is denoted by $\alpha(G_w)$ and is the maximum weight of an independent set of $G$. The \emph{weighted independence ratio} of the weighted graph is $\bar{\alpha}(G_w) = \frac{\alpha(G_w)}{w(V)}$. 

We define the \emph{optimal weighted independence ratio} of a finite graph $G$, denoted by $\alpha^{*}(G)$, as the infimum of all weighted independence ratios:

\begin{align}
\label{eq:alphastar}
\alpha^{*}(G) = \displaystyle\inf_{w \in W}\bar{\alpha}(G_w)
\end{align}

Then we have this basic equality:

\begin{lemma}\label{lemchif}
For every graph $G$,  $\alpha^{*}(G) = \frac{1}{\chi_f(G)}$.
\end{lemma}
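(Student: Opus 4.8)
The plan is to establish the two inequalities $\alpha^{*}(G) \ge \frac{1}{\chi_f(G)}$ and $\alpha^{*}(G) \le \frac{1}{\chi_f(G)}$ separately: the first by a direct averaging argument over an optimal fractional colouring, and the second by passing to the linear programming dual of \eqref{eq:fractlinprog}. Throughout I assume $V \neq \emptyset$, so that $\chi_f(G) \ge 1$ and $W \neq \emptyset$.

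For the lower bound, I would fix an optimal solution $(x_I)_{I \in \mathcal{I}(G)}$ of \eqref{eq:fractlinprog}, so that $\sum_{I} x_I = \chi_f(G)$ and $\sum_{I \ni y} x_I \ge 1$ for every $y \in V$. Given an arbitrary $w \in W$, since $w$ is nonnegative and not identically $0$ we have $w(V) > 0$, and
\[
w(V) = \sum_{y \in V} w(y) \le \sum_{y \in V} w(y)\sum_{I \ni y} x_I = \sum_{I \in \mathcal{I}(G)} x_I\, w(I) \le \Big(\sum_{I \in \mathcal{I}(G)} x_I\Big)\alpha(G_w) = \chi_f(G)\,\alpha(G_w),
\]
where the last inequality uses $w(I) \le \alpha(G_w)$ for every independent set $I$. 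Dividing by $w(V)$ gives $\bar{\alpha}(G_w) \ge \frac{1}{\chi_f(G)}$, and taking the infimum over $w \in W$ yields $\alpha^{*}(G) \ge \frac{1}{\chi_f(G)}$.

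For the upper bound, I would write down the LP dual of \eqref{eq:fractlinprog}: maximize $\sum_{y \in V} z_y$ subject to $z_y \ge 0$ for all $y$ and $\sum_{y \in I} z_y \le 1$ for all $I \in \mathcal{I}(G)$ — equivalently, maximize $z(V)$ over nonnegative $z$ with $\alpha(G_z) \le 1$. This dual is feasible (take $z \equiv 0$) and bounded, so strong duality gives an optimal $z^{*}$ with $z^{*}(V) = \chi_f(G) \ge 1 > 0$. In particular $z^{*}$ is not identically $0$, hence $z^{*} \in W$, and from feasibility $\alpha(G_{z^{*}}) \le 1$, so $\bar{\alpha}(G_{z^{*}}) = \frac{\alpha(G_{z^{*}})}{z^{*}(V)} \le \frac{1}{\chi_f(G)}$. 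Therefore $\alpha^{*}(G) \le \frac{1}{\chi_f(G)}$, and combining the two bounds proves the lemma (and incidentally shows the infimum in \eqref{eq:alphastar} is attained).

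The routine part is the averaging inequality; the only point requiring care is the normalization forced by the definition of a weight distribution. One must check that $\alpha(G_w) > 0$ for every $w \in W$ — which holds because every singleton is an independent set — before dividing, and one must verify that the optimal dual vector is genuinely a weight distribution rather than the trivial zero vector, which is where the bound $\chi_f(G) \ge 1$ is used. Beyond invoking strong LP duality for finite programs, I do not anticipate any real obstacle: this is the standard primal–dual correspondence between fractional colourings and fractional cliques.
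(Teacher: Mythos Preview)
Your proof is correct and follows essentially the same route as the paper's: both invoke strong LP duality to identify $\chi_f(G)$ with the optimum of the dual program and then read off $\alpha^{*}(G)\le 1/\chi_f(G)$ from an optimal dual vector, checking it is a genuine weight distribution because $\chi_f(G)>0$. The only cosmetic difference is in the other inequality: you obtain $\bar\alpha(G_w)\ge 1/\chi_f(G)$ by averaging an optimal primal fractional colouring against $w$, whereas the paper rescales $w$ by $\alpha(G_w)$ to produce a feasible dual point --- these are the same weak-duality computation in different clothing.
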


\begin{proof} %TODO demander à Arnaud son avis sur la preuve

As this equality is crucial for our work, we give its short proof for the sake of completeness.
	
We have to prove that 
$$\chi_f(G) = \displaystyle\sup_{w \in W} \frac{1}{\bar{\alpha}(G_w)}.$$

By the strong duality theorem, $\chi_f(G)$ is equal to the fractional clique number  $\omega_f(G)$, which is the solution of the dual of the linear program (\ref{eq:fractlinprog}) \cite{DBLP:journals/tit/Shannon56,DBLP:journals/combinatorica/GrotschelLS81}:

\begin{equation}
\label{eq:fractdualLP}
\begin{array}{ll@{}ll}
\text{maximize}  & \displaystyle\sum\limits_{v \in V} y_v, &\\
\text{subject to}& \displaystyle\sum\limits_{v \in I} & y_v \leq 1 & \forall I \in \mathcal{I}(G),\\
\text{and}       &                                                &y_v \geq 0, &\forall v \in V;
\end{array}
\end{equation}

Let $w$ be a weight distribution, let $y_v = \frac{w(v)}{\alpha_w(G)}$ for all $v \in V$, then $y_v \geq 0$ and for all independent set $I$, we have 
$$\sum_{v \in I} y_v = \frac{w(I)}{\alpha(G_w)} \leq 1.$$
Thus, we have
$$\chi_f(G) \geq \sum_{v \in V} \frac{w(v)}{\alpha_w(G)},$$ 
then
$$\chi_f(G) \geq \sup_{w \in W} \frac{1}{\bar{\alpha}(G_w)}.$$
Conversely, let $(y_v)$ be an optimal solution of the linear program (\ref{eq:fractdualLP}). Let $w(v) = y_v$ for all $v$ in $V$, $w$ is a weight distribution (because $\chi_f(G) > 0$), and the constraint $\sum_{v \in I} y_v \leq 1$ for all $I$ in $\mathcal{I}(G)$ implies $\alpha(G_w) \leq 1$.

Thus
$$\chi_f(G) = \sum_{v \in V} y_v = w(V) \leq \frac{w(V)}{\alpha(G_w)} \leq \displaystyle\sup_{w \in W} \frac{1}{\bar{\alpha}(G_w)}.$$
	
\iffalse	
In \cite{GodsilRoyle}, Godsil and Royle define a \emph{fractional clique} as a weight distribution on the vertices of the graph such that no independent set on the graph has weight more than 1 (as is the case in a unweighted clique, where the maximum independent sets are vertices and therefore have weight one). The weight of a fractional clique is defined as the total weight of the graph under the weighting that the fractional clique defines. The \emph{fractional clique number}\index{fractional clique number} $\omega_f$ of a graph is the maximum weight of a fractional clique.
It follows directly from the definition of fractional clique number that for all graph $G$, $\displaystyle{\omega_f(G)=\frac 1 {\alpha^{\ast} G}}$.

\vspace{0.1cm}
As explained in \cite{GodsilRoyle}, the fractional clique number of a graph is given by the dual of the linear program described above. By the strong duality theorem, we therefore have $\omega_f(G)=\chi_f(G)$. The relation between fractional clique number and optimal weighted independence ratio follows.
\fi
\end{proof}

The unweighted case can be seen as a specific case of weighted graphs where all the vertices have weight one. Hence:

\begin{corollary}
For every graph $G$,  $\alpha^{*}(G) \leq \bar{\alpha}(G)$
\end{corollary}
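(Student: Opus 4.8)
The plan is to realize the ordinary (unweighted) independence ratio $\bar{\alpha}(G)$ as one particular value of the weighted independence ratio, so that it automatically bounds from above the infimum defining $\alpha^{*}(G)$. Concretely, I would take $w_0 \colon V \to \R_{+}$ to be the constant weight distribution $w_0(v) = 1$ for every $v \in V$. Since a graph here has at least one vertex, $w_0$ is not identically $0$, so $w_0 \in W$ and $G_{w_0}$ is a legitimate weighted graph.

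For this choice, the weight of any vertex set $A \subset V$ is simply its cardinality $|A|$. In particular, a maximum-weight independent set of $G_{w_0}$ is exactly a maximum-cardinality independent set of $G$, so $\alpha(G_{w_0}) = \alpha(G)$, while the total weight is $w_0(V) = |V|$; hence $\bar{\alpha}(G_{w_0}) = \alpha(G_{w_0})/w_0(V) = \alpha(G)/|V| = \bar{\alpha}(G)$. Since by (\ref{eq:alphastar}) the parameter $\alpha^{*}(G)$ is the infimum of $\bar{\alpha}(G_w)$ over all $w \in W$, and $w_0$ is one admissible distribution, we conclude $\alpha^{*}(G) \leq \bar{\alpha}(G_{w_0}) = \bar{\alpha}(G)$.

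There is no genuine obstacle here: the statement is an immediate specialization of the definition of $\alpha^{*}$, and the only points worth a line of justification are that the constant distribution is admissible (nonzero, which needs $V \neq \emptyset$) and that a maximum-weight independent set under unit weights is the same as an ordinary maximum independent set. Alternatively, one could simply remark that the unweighted case is the instance $w \equiv 1$ of the weighted setting, as already noted just before the statement, and that passing to the infimum over $W$ can only decrease the value.
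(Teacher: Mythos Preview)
Your proof is correct and is exactly the argument the paper has in mind: it simply observes that the constant weighting $w_0\equiv 1$ is admissible and gives $\bar{\alpha}(G_{w_0})=\bar{\alpha}(G)$, so the infimum defining $\alpha^{*}(G)$ is at most $\bar{\alpha}(G)$. The paper presents this as an immediate consequence of the remark preceding the corollary, without a separate written proof.
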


The bound $\alpha^{*}(G) \leq \bar{\alpha}(G)$ is not always tight. For example, consider the path of three vertices $P_3$. Its independence ratio is $\bar{\alpha}(P_3) = \frac 2 3$ but $\alpha^{\ast}(P_3)=\frac 1 2$ (achieved by the weight distribution depicted in Figure  \ref{fig:P3weighted}).

\begin{figure}[h]
		\centering\includegraphics[width=0.3\linewidth]{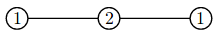}
		\caption{A weight distribution on $P_3$.}
		\label{fig:P3weighted}
\end{figure}

If $w$ is a weight distribution such that $\alpha^{*}(G) = \bar{\alpha}(G_w)$, we say that $w$ is an \emph{optimal weighting} of $G$. It follows from the definition of the fractional chromatic number as a bounded feasible linear program that there always is an optimal weighting for every graph $G$.

\subsection{Sets avoiding distance 1}
\label{sub:sets}

Let $\R^n$ be equipped with a norm $\left\| \cdot \right\|$. 
A subset $A \subset \R^n$ is said to avoid distance $1$ if:
$\forall x,y \in A, \left\| x-y \right\| \neq 1.$
The \emph{(upper) density} of a measurable subset $A \subset \R^n$ with respect to the Lebesgue measure, denoted by $\delta(A)$, is defined as:
$$\delta(A) = \limsup_{R \to +\infty} \frac{\text{Leb}(A \cap \left[ -R,R\right]^n)}{\text{Leb}(\left[ -R,R\right]^n)}.$$

We denote by $m_1(\R^n, \left\| \cdot \right\|)$ the supremum of the densities of measurable sets avoiding distance 1. As mentioned in the introduction, the following lemma is central in our work.

\begin{lemma}\cite{thesethomas}
\label{lem:alphastar}
If $G = (V,E)$ is a unit-distance graph on $\R^n$, then
$$ m_1(\R^n, \left\| \cdot \right\|) \leq \alpha^{*}(G). $$ 
\end{lemma}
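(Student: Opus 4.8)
The plan is to reduce the statement to the known bound $m_1(\R^n,\|\cdot\|) \leq 1/\chi_f(G) = \alpha^{*}(G)$ via an averaging argument over a random translate and rotation of the unit-distance graph $G$. Fix a measurable set $A \subset \R^n$ avoiding distance $1$, and let $w$ be an arbitrary weight distribution on $V = \{v_1,\dots,v_k\}$. First I would normalize: after translating we may assume $G$ sits near the origin, and I want to compare $\delta(A)$ to the weighted independence ratio $\bar\alpha(G_w)$. The key observation is that for any isometry $\sigma$ of $\R^n$, the set $\sigma(V)\cap A$ is an independent set of the (isometric copy of the) graph $G$, because $A$ avoids distance $1$ and edges of $G$ join points at distance exactly $1$; hence its $w$-weight is at most $\alpha(G_w)$.

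The core step is then a Fubini/averaging computation. Working inside a large box $[-R,R]^n$, I would integrate the quantity $\sum_{i} w(v_i)\,\mathbf{1}_A(\sigma(v_i))$ over $\sigma$ ranging over translations by $t \in [-R,R]^n$ (and, if one wants the bound for every isometric placement, also over rotations with respect to Haar measure on $SO(n)$; for the distance-$1$ graph this is not strictly needed since a translate suffices, but it does no harm). Swapping the sum and the integral, each term $\int w(v_i)\,\mathbf{1}_A(t + v_i)\,dt$ is, up to boundary effects of size $O(R^{n-1})$, equal to $w(v_i)\cdot \mathrm{Leb}(A \cap [-R,R]^n)$. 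Summing over $i$ gives that the average of $w(\sigma(V)\cap A)$ over translates is $w(V)\cdot \frac{\mathrm{Leb}(A\cap[-R,R]^n)}{(2R)^n} + o(1)$. On the other hand, by the independence observation, the integrand is pointwise at most $\alpha(G_w)$, so the average is at most $\alpha(G_w)$. Combining, $w(V)\cdot \frac{\mathrm{Leb}(A\cap[-R,R]^n)}{(2R)^n} \leq \alpha(G_w) + o(1)$; letting $R \to +\infty$ along a sequence realizing the $\limsup$ yields $\delta(A)\cdot w(V) \leq \alpha(G_w)$, i.e. $\delta(A) \leq \bar\alpha(G_w)$. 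Taking the infimum over $w \in W$ gives $\delta(A) \leq \alpha^{*}(G)$, and taking the supremum over all measurable $A$ avoiding distance $1$ gives $m_1(\R^n,\|\cdot\|) \leq \alpha^{*}(G)$.

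I expect the main obstacle to be purely technical: controlling the boundary terms so that the averaging identity holds with an error that is $o((2R)^n)$, and ensuring measurability of the map $(t,\sigma)\mapsto w(\sigma(V)\cap A)$ so that Fubini applies. One clean way around the boundary issue is to replace $[-R,R]^n$ by a slightly enlarged box $[-R-D, R+D]^n$, where $D$ bounds the diameter of $V$, when estimating the integral of $\mathbf{1}_A(t+v_i)$, so that every relevant translate of a vertex stays inside a fixed reference region; the discrepancy between the two boxes is $O(R^{n-1})$ and washes out in the limit. Since the weight distribution $w$ is fixed and $V$ is finite, no uniformity in $w$ is needed at this stage — the infimum over $w$ is taken only at the very end — so the analytic part reduces to finitely many scalar integrals, each handled by the translation-invariance of Lebesgue measure.
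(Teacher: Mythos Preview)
Your argument is correct and is essentially the same as the paper's proof: the paper phrases the averaging step probabilistically (a uniformly random translate $X\in[-R,R]^n$ and the random variable $N=\sum_{v\in V}w(v)\mathbf{1}_{X+v\in S}$, with $\mathbb{E}(N)\leq\alpha(G_w)$), while you phrase it as an integral over translations and invoke Fubini; the boundary issue you flag is handled in the paper by the one-line remark that $\delta(S-v)=\delta(S)$ for each of the finitely many $v\in V$. The rotation averaging you mention is, as you note, unnecessary here and does not appear in the paper.
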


\begin{proof} %TODO voir ce qu'on fait de cette preuve
Here we adapt the proof given in \cite{AD1} for the unweighted case.

Let $R > 0$ be a real number, $w$ be a weight distribution on $V$ , and let $S$ be a set avoiding distance $1$. We consider $X_R \in \left[ -R,R\right] ^n$ chosen uniformly at random. Therefore, we have 
$$\mathbb{P}(X_R\in S) = \frac{Leb(S \cap \left[ -R,R\right] ^n)}{Leb(\left[ -R,R\right] ^n)},$$
thus, 
$$\limsup_{R \to +\infty} \mathbb{P}(X_R\in S) = \delta(S).$$
We now introduce the random variable $N = \sum_{v \in V} w(v)\mathbf{1}_{X_R+v \in S}$, its expected value is given by 
$$\mathbb{E}(N) = \sum_{v \in V} w(v) \mathbb{P}\left(X_R \in S - v \right),$$
as $V$ is finite and $\delta(S-v) = \delta(S)$ for all $v$ in $V$, we have
$$\limsup_{R \to +\infty}\mathbb{E}(N) = \sum_{v \in V}w(v)\delta(S).$$
Moreover, for all realization of $X_R$, and for all $v_1,v_2$ in $V$,
$$\left\| (X_R + v_1) - (X_R + v_2) \right\| = \left\| v_1 - v_2 \right\|,$$
and $S$ avoids distance $1$, hence for all realization of $X_R$ the set $(X_R + V) \cap S$ is an independent set of the graph $G$, therefore $N \leq \alpha(G_w)$.
Finally, we have $\delta(S) \leq \bar{\alpha}(G_w)$. This inequality is verified for every weight distribution, thus $\delta(S) \leq \alpha^{*}(G)$ for every set avoiding distance $1$ in $\R^{n}$, so we finally get

$$m_1(\R^n, \left\| \cdot \right\|) \leq \alpha^{*}(G).$$
\end{proof}

\section{Algorithm}
\label{sec:algo}

Let $v$ be a vertex of a graph $G$. The \emph{orbit} of $v$ is defined as $\left\{f(v) | f\in \text{Aut}\left( G\right) \right\},$ where $\text{Aut}\left( G\right)$ denotes the \emph{automorphism group} of $G$. Determining the orbits of a graph is a difficult problem in the general case but can be done efficiently on the geometric graphs that we study in this paper.

There is an optimal weighting that assigns the same value to every vertex of the same orbit. We call such weightings \emph{symmetric weightings}. Indeed, let $w$ be an optimal weighting distribution, and consider all its images under the automorphism group of $G$. Due to the convexity of the set of optimal weighting distributions, the average of these weightings is an optimal weighting distribution and it is symmetric.    
Let $G=(V,E)$ be a graph, let $O_1,\dots,O_p$ be its orbits and let $\mathrm{orbit}$ be a function that returns the number of the orbit of a given vertex $v$.

\subsection{General algorithm}

Our algorithm uses two linear programs that we present here. 

Given a collection of independent sets $\mathscr S=\{S_1,\dots,S_k\}$, our first program, that we call $P_1$, returns a symmetric weighting that minimizes the maximum weight of a set of $\mathscr S$. We set $n_{i,j}=|S_i\cap O_j|$. Our variables are $w_1,\dots,w_p$ where $w_j$ indicates the weight of the vertices of $O_j$.

\begin{equation}\tag{$P_1(\mathscr S)$}
\label{eq:LPdensity}
\begin{array}{ll@{}ll}
\text{minimize}  & M, &\\
\text{subject to}& \displaystyle\sum\limits_{j = 1}^{p} n_{i,j}w_j \leq M & \forall i \in \left\{1,...,k\right\},\\
                 & \displaystyle\sum_{j=1}^{p}w_j\left|O_j\right| = 1,\\
\text{and}       & w_1,...,w_p,M \in \R.
\end{array}
\end{equation}

%\begin{equation}%\label{P_1}\tag{$P_1(\mathscr S)$}
%\left\{\begin{split}
%            \text{minimize }&M\\
%          \sum_{j=1}^{p} w_j |O_j| &=1\\
%           \forall i\in[1,k], \sum_{j=1}^{p} n_{i,j}w_j &\leq M
%         \end{split}\right.       
%\end{equation}

Note that if $\mathscr S$ is the set of all independent sets in $G$, Linear Program \ref{eq:LPdensity} will return $\alpha^{\ast}(G)$ and an optimal weighting of $G$. However, this condition is not necessary and it is actually suficient that $\mathscr S$ contains an independent set of maximum weight for every symmetric weighting of $G$.

The second program, $P_2$, is an integer linear program that returns an independent set of maximum weight for a given symmetric weighting $w=(w_1,\dots,w_p)$ defined by the weight it gives to each orbit. For each vertex $v$, we use a binary variable $x_v$ that indicates whether $v$ belongs to the maximum-weight independent set $S$ we create.

\begin{equation}\tag{$P_2(w)$}
\label{eq:LPmaximalConstraint}
\begin{array}{ll@{}ll}
\text{maximize}  & \displaystyle\sum_{v\in V} w_{\textrm{orbit}(v)} x_v, &\\
\text{subject to}& x_u + x_v \leq 1 & \forall uv \in E,\\
\text{and}       & x_v \in \left\{0,1\right\} & \forall v \in V.
\end{array}
\end{equation}

%\begin{equation}%\label{P_2}\tag{$P_2(w)$}
%\left\{\begin{split}
%          \text{maximize }&\sum_{v\in V} w_{\textrm{orbit}(v)} x_v\\
%          \forall uv\in E, x_u+x_v&\leq 1
%         \end{split}\right.       
%\end{equation}

Our algorithm to compute the optimal weighted independence ratio of a graph is described in Algorithm \ref{algoowir}.

\begin{algorithm}[!h]
Let $O_1,\dots,O_p$ be the orbits of vertices of $G$

Let $\mathscr S=\{\}$ and for all $j\in [1,p], w_j=\frac 1 {|V|}$

Let $\optup=1$ and $\optlow=0$.

\While {$\optup\neq\optlow$}{

Let $S$ be the independent set returned by \ref{eq:LPmaximalConstraint}.

$\mathscr S=\mathscr S\cup \{S\}$.

$\optup=w(S)$

Let $w$ be the weighting returned by \ref{eq:LPdensity} and let $\optlow$ be the objective value.

}

\textbf{return} the $w_j$ and $\optup$.
\caption{Computing an optimal weighting of a graph $G$.}
\label{algoowir}
\end{algorithm}

At each step of the algorithm, $\optup$ is an upper bound on $\alpha^{\ast}(G)$ and $\optlow$ is a lower bound. Indeed, after line 7, \ref{eq:LPmaximalConstraint} ensures that for any symmetric weight distribution $w$, there exists an independent set in $\mathscr S$ whose weight is at least $\optup$. Furthermore, after line 8, \ref{eq:LPdensity} ensures that there exists a weighting (namely, $w$) that reaches the independence ratio of $\optlow$.

At each iteration of the \textbf{while} loop except the last one, our algorithm returns an independent set $S$ for which there exists a symmetric weight distribution such that $S$ is strictly heavier than every set of $\mathscr S$. The last iteration happens when our set $\mathscr S$ contains a maximum-weight independent set for every symmetric weighting. 

At the end of the execution of the algorithm, the $w_j$ describes an optimal weighting and $\optlow=\optup=\alpha^{\ast}(G)$. 

\subsection{Some optimizations}

In Algorithm \ref{algoowir}, the most costing step is the computation of a stable set of maximal weight (Linear program \ref{eq:LPmaximalConstraint}). The computation of the weights is fast because it is a pure linear program. 

To solve the integer linear program \ref{eq:LPmaximalConstraint} faster, we add some additional constraints as follows. If $H$ is an induced subgraph, the inequality $\sum_{x \in V(H)} x_v \leq \alpha(H)$ is obviously valid for \ref{eq:LPmaximalConstraint}.

Unit-distance graphs have a small number of maximal cliques. Therefore, we replace the constraints 

\begin{eqnarray*}
x_u + x_v \leq 1, & \left\{u,v\right\} \in E,
\end{eqnarray*}
by
\begin{eqnarray*}
\displaystyle\sum_{v \in Q} x_v \leq 1, & Q \in \mathcal{Q},
\end{eqnarray*}

where $\mathcal{Q}$ denotes the set of maximal cliques, as every edge constraint is dominated by some maximal clique constraint.

Furthermore, the graphs that we build have a lot of induced Moser spindles (Figure \ref{fig:mosercolor}). 

% An important unit-distance graph in the Euclidean plane is the Moser Spindle (see Figure \ref{fig:mosercolor}).
% 
\begin{figure}[h]
 	\centering\includegraphics[width=0.2\linewidth]{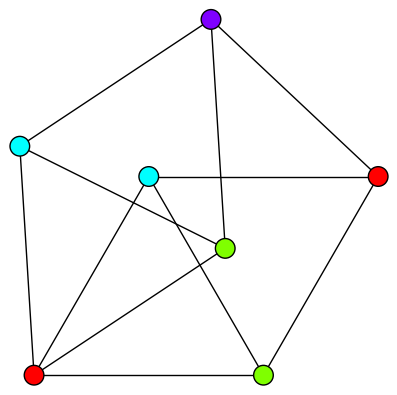}
 	\caption{For the Moser Spindle, $\alpha = 2$, $\chi = 4$ and $\alpha^* = 2/7$.}
 	\label{fig:mosercolor}
 \end{figure}   %TODO déplacer Moser

Hence, we have added the following set of constraints:
\begin{eqnarray*}
	\displaystyle\sum_{v \in M} x_v \leq 2, & M \in \mathcal{M},
\end{eqnarray*}
where $\mathcal{M}$ denotes the set of induced Moser spindles.

These extra constraints turned out to speed up significantly the computations. For graphs which required a huge computation time, we observed a reduction of computation time up to a factor 4, as well as an important decrease of the required space.

All computations were run on computers with 2 processors of 6 cores each (Intel$^{\tiny \copyright}$ Xeon x5675 @ 3,06 GHz) with 48 Go RAM. We used IBM$^{\tiny \copyright}$ \texttt{CPLEX} (version 12) with default parameters, as integer linear programs solver. To compute the automorphism group  of a graph and enumerate the required induced subgraphs, we used \texttt{SageMath} implementations. 

% TODO
% citer these Thomas
% partie algo: chif NP hard, borner nb cliques max ?

\section{Construction of a graph proving Theorem \ref{th:main}}\label{sec:bound}
\label{sec:proof}

In this Section, we present our construction of a unit-distance graph improving the bound on $m_1(\R^2)$. 

The process to build an interesting unit-distance graph can be summarized as follows. We start from a chosen unit-distance graph called the \emph{starting graph}. Then we apply graph operations on it. These operations aim at lowering the optimal weighted independence ratio. Thus, two questions arise: the choice of the starting graph and the strategy of manipulation of this graph.

In Subsection \ref{sub:operations}, we introduce useful graph operations. In Subsection \ref{sub:candidate}, we explain our choice of starting graph. In Subsection \ref{sub:manipulation} we describe precisely the manipulations we did on unit-distance graphs in order to lower the bound on $m_1(\R^2)$. Finally, in Subsection \ref{sub:finalgraph}, we exhibit the unit-distance graph achieving the best bound.

\subsection{Graph operations}
\label{sub:operations}

This subsection is dedicated to the graph operations we use throughout this section. Here, we only consider unit-distance graph and the set of edges is therefore determined by the position of the vertices.

The \emph{Minkowski sum} of two unit-distance graphs is defined as follows. Let $G$ and $G'$ be two unit-distance graphs, let $n'$ be the number of vertices of $G'$ and let $u$ be a vertex of $G'$ that we call its \emph{center}. The Minkowski sum of $G$ and $G'$, denoted $G \oplus G'$, is the unit-distance graph whose vertex set is the union of $n'$ copies of $V(G)$ obtained by translating $V(G)$ by the vector $uv$ for $v\in V(G')$.
% centering a copy of $G'$ on each vertex of $G$. 
% 	\item Centering means that we choose a vertex of $G'$ and call it the center of $G'$, then we stick a copy of $G'$ on every vertex of $G$, and the sticking vertex of $G'$ is its center.

Note that because of the geometric properties of unit-distance graphs, the Minkowski sum of two graphs may contain edges that their Cartesian product does not. For example, Figure \ref{fig:rhombussum1} depicts two geometric paths of two vertices whose Cartesian product is a cycle of four vertices but because of the geometry of these graphs, we can see in Figure \ref{fig:rhombussum2} that their Minkowski sum contains five edges.

% The notion of Minkowski sum is very close to the notion of Cartesian product of graphs in its combinatorial sense. The particularity for unit distance graphs is that we consider geometric graphs. It is possible that beyond the classical construction of the Cartesian product, other edges can be created thanks to the geometric configuration.

% A simple example is the rhombus. One can construct a rhombus thanks to the Minkowski sum of two copies of $P_2$. 
% \begin{itemize}
% 	\item In , we have the classical Cartesian product of these copies. 
% 	\item In figure \ref{fig:rhombussum2}, the red edge is created thanks to the geometry of the vertices.
% \end{itemize}

\begin{figure}[h]
	\begin{minipage}{0.5\textwidth}
		\centering\includegraphics[width=1\linewidth]{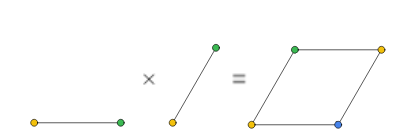}
		\caption{The classical Cartesian product.}
		\label{fig:rhombussum1}
	\end{minipage}
	\hfill
	\begin{minipage}{0.5\textwidth}
		\centering\includegraphics[width=1\linewidth]{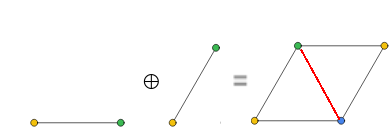}
		\caption{The Minkowski sum of the same two graphs.}
		\label{fig:rhombussum2}
	\end{minipage}
% 	\caption{}
\end{figure}

% The second fundamental operation is the spindling operation. %To define it, we first need to introduce auxiliary operations.

% \begin{itemize}
% 	\item We define the \textit{union} $G \cup G'$ as the unit-distance graph whose set of vertices is $V \cup V'$;
% 	\item Let $\theta \in \left[0,2\pi \right[$ and $u \in V$, we define $\text{rot}(G,u,\theta)$ by the unit-distance graph whose set of vertices is $R_{\theta}(V - u) + u$, where $V - u = \left\{v - u, v \in V \right\}$ and $\R_{\theta}(a,b) = \left(a\cos(\theta) - b\sin(\theta), a\sin(\theta) + b\cos(\theta)\right)$. It is a copy of $G$ rotated by an angle $\theta$ around the vertex $u$;
% 	\item We define the \textit{trimming} operation: let $r > 0$, $\text{trim}(G,r)$ denotes the the trimming of $G$ after removing all the vertices that are at Euclidean distance greater than $r$.
% \end{itemize}

%The second fundamental operation is the spindling operation. 
Let $G=(V,E)$ be a unit-distance graph and let $u,v \in V$. The \emph{spindling} of $G$ between $u$ and $v$ contains the points of $G$ and their image by a rotation by an angle  $\theta$ around the vertex $u$ where $\theta$ is such that the distance between $v$ and its image is exactly one.
% the unit-distance graph obtained by taking a copy of $G$ and a copy of $\text{rot}(G,u,\theta)$, making them coincident on the vertex $u$, with $\theta$ such that the vertices corresponding to $v$ in each copy are adjacent. We denote this graph $\text{spindle}(G,u,v)$.

Consider the graph $G$ depicted in Figure \ref{fig:rhombus}. The graph $G$ is 3-colorable but note that in every 3-coloring of $G$, the vertices $u$ and $v$ must have the same color. Hence, the Moser spindle (Figure \ref{fig:mosercolor}) obtained by spindling $G$ between $u$ and $v$ is not 3-colorable.

\begin{figure}[h]
	\begin{minipage}{0.5\textwidth}
		\centering\includegraphics[width=0.28\linewidth]{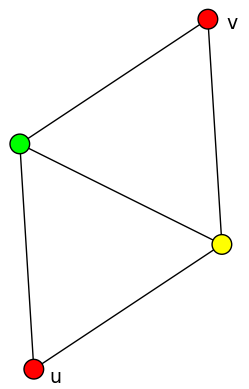}
		\caption{The graph $G$ and the two vertices $u$ and $v$.}
		\label{fig:rhombus}
	\end{minipage}
	\hfill
	\begin{minipage}{0.5\textwidth}
		\centering\includegraphics[width=0.45\linewidth]{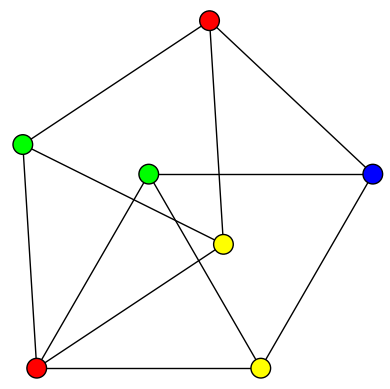}
		\caption{The Moser spindle obtained by spindling $G$ between $u$ and $v$.}
		\label{fig:spindle}
	\end{minipage}
% 	\caption{}
\end{figure}

Let $G=(V,E)$ be a unit distance graph and let $r>0$. The \emph{trimming} of $G$ for the length $r$ is the unit-distance graph obtained after removing all vertices of $V$ whose Euclidean norm is greater than $r$.

% The Moser Spindle is the spindling of a rhombus graph:
%\begin{figure}[h]
%	\centering\includegraphics[width=0.4\linewidth]{rhombusspindle.png}
%	\caption{The graph $G$ (on the left) and the Moser spindle (on the right), obtained by spindling $G$ between $u$ and $v$.}
%	\label{rhombusmoser}
%\end{figure}

%\begin{itemize}
%	\item We started from a set chosen set of unit-vectors viewed as complex numbers of unit modulus: the unit-distance graph $V$ whose vertices are the origin and the unit vectors. 
%	\item We took the Minkowski sum of the graph with itself ($V \oplus V$).
%	\item We choose a pair of vertices $(u,v)$ called a \textit{spindling pair} and we consider the spindling of $V \oplus V$ between $u$ and $v$.
%	\item We iterate the process insofar as the obtained unit-distance graph is within the reach of our program.
%\end{itemize}

\subsection{Choosing a good starting graph}
\label{sub:candidate}

We build the starting graph as follows. From a chosen set of unit-vectors viewed as complex numbers of unit modulus, we create the unit-distance graph $G$ whose vertices are the origin and the unit vectors. Then we take the Minkowski sum of the graph with itself ($G \oplus G$).

In practice, the unit vectors are chosen in the ring $R = \Z\left[e^{i\frac{\pi}{3}},e^{i\arccos\left(\frac{5}{6}\right)},e^{-i\arccos\left(\frac{5}{6}\right)} \right]$. One of the reasons we choose this ring is because it contains many interlocked Moser spindles. Such graphs are good candidates for a low optimal weighted independence ratio. 

%The elements of $R$ of unit modulus can be fully described. Thus, the point was to chose a finite set of unit vectors such that the unit-distance graph obtained after the Minkowski sum is good enough. 

We made several tests on different sets of unit vectors. We mainly based our choice on the numerical results we obtained after a few iterations of the process of manipulation (Subsection \ref{sub:manipulation}). An important criterion is the order of the graph we start from. Taking these conditions into account, we decided to start from de Grey's intermediate graph $W$ (see \cite{DeGrey}): a 301-vertex unit distance graph built from the Minkowski sum of a set of 30 unit vectors plus the origin. This graph actually results from a trimming of the original Minkowski sum (451 vertices), which after a few manipulations, has a similar optimal weighted independence ratio, with half as many vertices.

\subsection{Reduction and spindling of unit-distance graphs}
\label{sub:manipulation}

The strategy we choose to manipulate graphs consists in two steps. The first one is a deletion of certain vertices of the graph, giving a \emph{reduced graph} $G$. The second one is a choice of a pair of vertices $(u,v$) of the reduced graph called a \emph{spindling pair}. The graph obtained is then the spindling of $G$ between $u$ and $v$.

\paragraph{Reduction.}

Given a unit-distance graph $G$, we use Algorithm \ref{algoowir} to compute $\alpha^{*}(G)$ and an optimal weight distribution. It turns out that some orbits of an optimal solution may have zero weight. Thus we can delete these orbits and obtain a smaller graph with the same value of $\alpha^{*}$. Some of the weights given by the algorithm are "small" in comparison to the other and can be deleted with small impact on the optimal weighted independence ratio, as expressed by Lemma \ref{lem:deletion}.

\begin{lemma}
\label{lem:deletion}
Let $G=(V,E)$ be a graph, let $\varepsilon > 0$ and $w : V \rightarrow \R_+$ be a weight distribution such that $\alpha^{*}(G) = \bar{\alpha}(G_w)$. Then if $V' \subset V$ is such that $\frac{w(V')}{w(V)} = \varepsilon$, if $G'$ is the induced subgraph of $G$ whose set of vertices is $V \setminus V'$,
$$(1-\varepsilon)\alpha^{*}(G') \leq \alpha^{*}(G).$$
\end{lemma}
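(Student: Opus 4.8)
The plan is to exploit the fact that $w$ is an optimal weighting of $G$, hence by Lemma~\ref{lemchif} the weight distribution $w$ certifies $\alpha^*(G) = \bar\alpha(G_w)$, and to use the \emph{restriction} of $w$ to $V \setminus V'$ as a candidate weighting of $G'$. Concretely, let $w'$ be the weight distribution on $G'$ defined by $w'(v) = w(v)$ for all $v \in V \setminus V'$ (if $w'$ is identically zero we may discard that degenerate case, or perturb slightly; generically $w(V\setminus V') = (1-\varepsilon)w(V) > 0$). The key observation is that every independent set $I'$ of $G'$ is also an independent set of $G$, since $G'$ is an \emph{induced} subgraph; therefore $w'(I') = w(I') \leq \alpha(G_w)$. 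Taking the maximum over independent sets of $G'$ gives $\alpha(G'_{w'}) \leq \alpha(G_w)$.

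Next I would compute the total weight of $G'$ under $w'$. By hypothesis $w(V') = \varepsilon\, w(V)$, so
$$w'(V \setminus V') = w(V) - w(V') = (1-\varepsilon)\,w(V).$$
Combining the two facts:
$$\alpha^*(G') \leq \bar\alpha(G'_{w'}) = \frac{\alpha(G'_{w'})}{w'(V\setminus V')} \leq \frac{\alpha(G_w)}{(1-\varepsilon)\,w(V)} = \frac{1}{1-\varepsilon}\,\bar\alpha(G_w) = \frac{\alpha^*(G)}{1-\varepsilon}.$$
Multiplying through by $1-\varepsilon$ yields $(1-\varepsilon)\alpha^*(G') \leq \alpha^*(G)$, which is exactly the claim. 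The first inequality here uses the definition of $\alpha^*$ as an infimum over all weight distributions (so $\alpha^*(G') \leq \bar\alpha(G'_{w'})$ for our particular $w'$), and the last equality uses $\alpha^*(G) = \bar\alpha(G_w)$ from the hypothesis.

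The argument is essentially a one-line comparison of weighted independence ratios, so there is no deep obstacle; the only points requiring care are (i) checking that $w'$ is a legitimate weight distribution, i.e.\ not identically zero — this holds whenever $\varepsilon < 1$ and could be handled by remarking that $\varepsilon < 1$ is implicit, or the statement is vacuous otherwise; and (ii) making sure the ``induced subgraph'' hypothesis is genuinely used, so that independent sets of $G'$ lift to independent sets of $G$ without gaining edges. I would state these two points explicitly and keep the rest of the proof as the short chain of (in)equalities above.
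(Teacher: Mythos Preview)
Your argument is correct and follows exactly the same route as the paper: restrict the optimal weighting $w$ to $V\setminus V'$, use that independent sets of the induced subgraph $G'$ are independent sets of $G$ to get $\alpha(G'_{w'})\leq\alpha(G_w)$, and divide by the remaining total weight $(1-\varepsilon)w(V)$. The paper's proof is the same chain of inequalities, just written more tersely (without introducing the separate name $w'$ and without the remarks on the degenerate case).
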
 

\begin{proof}
Every independent set of $G'$ is an independent set of $G$. Thus, $\alpha(G'_w) \leq \alpha(G_w)$. Then, 
$$\alpha^{*}(G') \leq \frac{\alpha(G'_w)}{w(V \setminus V')} \leq \frac{\alpha(G_w)}{w(V \setminus V')} = \frac{1}{1-\varepsilon}\alpha^{*}(G).$$
\end{proof}

\paragraph{Spindling.}

The step of spindling is as follows:
\begin{itemize}
\item we choose a vertex $v$ of an orbit with greatest weight;
\item for every other vertex $v'$ of the graph, we use an integer linear program to find a maximal weighted independent set which contains $v'$ and does not contain $v$;
\item we choose a vertex whose corresponding independent set is of the smallest weight possible, we denote this vertex $u$;
\item we spindle the graph between $u$ and $v$.
\end{itemize}

As the running time of Algorithm \ref{algoowir} is lower for graphs with lots of symmetries, we get high symmetrical graphs by rotating by every angles $\frac{k\pi}{3}$ for $k \in \left[0,5\right]$. We call this operation the \emph{circling} operation. Deletion of orbits of lowest weight and circling reduces significantly the number of orbits. %In practice, the aim is to reduce a spindled graph in order to get a graph obtained through the circling operation within the reach of our program.

\subsection{A unit-distance graph $G$ that achieves $\chi_f(G) \leq 1999983/512933
$}
\label{sub:finalgraph}

As we have already mentioned, our starting point is de Grey's intermediate graph $W$ (see \cite{DeGrey}). We iterated the process described in Subsection \ref{sub:manipulation} seven times. The last step of our construction is a circled graph of 607 vertices. This graph is available at \url{https://www.labri.fr/perso/pecher/pmwiki/pmwiki.php/Research/AvoidingDistance1} together with a weight distribution close to the optimal that achieves an independance ratio of 512933/1999983.

\section*{Concluding remark}

In this work, we establish a new upper bound on maximal density of the sets avoiding Euclidean distance 1 in the plane and a new lower bound on its fractional chromatic number, by using a graph theoretical approach. Our algorithm is not limited to the Euclidean plane and can provide upper bounds on the maximal density of sets avoiding some distances in various normed spaces. Ongoing work studies the case of parallelohedron norms.

\acknowledgements
\label{sec:ack}

We would like to thank Christine Bachoc and Philippe Moustrou for many fruitful discussions.

\section*{Update}

By the time our paper was published in DMTCS, the bounds we establish have been improved by other authors. In \cite{AmbrusMatolcsi}, Ambrus and Matolcsi have established that $m_1(\R^2) \leq 0.25442$ with a method based on harmonic analysis very different from ours. To the best of our knowledge, one cannot infer a lower bound on $\chi_f(\R^2)$ from this result, as optimal fractional colourings could use non-measurable colour classes. 

In a polymath forum dedicated to this topic\footnote{\href{https://dustingmixon.wordpress.com/2019/03/23/polymath16-twelfth-thread-year-in-review-and-future-plans/\#comment-23601}{https://dustingmixon.wordpress.com/2019/03/23/polymath16-twelfth-thread-year-in-review-and-future-plans/\#comment-23601}}, Parts announced the construction of a 919-vertex graph that reaches $\chi_f(\R^2) \geq 3.98$. The result has not been published, but would imply that $m_1(\R^2) \leq 0.2513$ and get us increasingly close to proving Erd\H os' conjecture.

\bibliographystyle{abbrvnat}
% use the following instead if you encounter problems 
%\bibliographystyle{alpha}
\bibliography{article}

\begin{thebibliography}{14}
\providecommand{\natexlab}[1]{#1}
\providecommand{\url}[1]{\texttt{#1}}
\expandafter\ifx\csname urlstyle\endcsname\relax
  \providecommand{\doi}[1]{doi: #1}\else
  \providecommand{\doi}{doi: \begingroup \urlstyle{rm}\Url}\fi

\bibitem[Ambrus and Matolcsi(2022)]{AmbrusMatolcsi}
G.~Ambrus and M.~Matolcsi.
\newblock Density estimates of 1-avoiding sets via higher order correlations.
\newblock \emph{Discret. Comput. Geom.}, 67\penalty0 (4):\penalty0 1245--1256,
  2022.
\newblock \doi{10.1007/s00454-020-00263-3}.
\newblock URL \url{https://doi.org/10.1007/s00454-020-00263-3}.

\bibitem[Bachoc et~al.(2015)Bachoc, Passuello, and Thiery]{Christine2015}
C.~Bachoc, A.~Passuello, and A.~Thiery.
\newblock The density of sets avoiding distance 1 in {E}uclidean space.
\newblock \emph{Discrete Comput. Geom.}, 53\penalty0 (4):\penalty0 783--808,
  2015.
\newblock ISSN 0179-5376.
\newblock \doi{10.1007/s00454-015-9668-z}.
\newblock URL \url{http://dx.doi.org/10.1007/s00454-015-9668-z}.

\bibitem[Bachoc et~al.(2017)Bachoc, Bellitto, Moustrou, and Pêcher]{AD1}
C.~Bachoc, T.~Bellitto, P.~Moustrou, and A.~Pêcher.
\newblock On the density of sets avoiding parallelohedron distance 1.
\newblock \emph{CoRR}, abs/1708.00291, 2017.

\bibitem[Bellitto(2018)]{thesethomas}
T.~Bellitto.
\newblock Walks, transitions and geometric distances in graphs, August 2018.

\bibitem[Cranston and Rabern(2017)]{chifR2}
D.~W. Cranston and L.~Rabern.
\newblock The fractional chromatic number of the plane.
\newblock \emph{Combinatorica}, 37\penalty0 (5):\penalty0 837--861, 2017.
\newblock \doi{10.1007/s00493-016-3380-3}.
\newblock URL \url{https://doi.org/10.1007/s00493-016-3380-3}.

\bibitem[Croft(1967)]{Croft}
H.~T. Croft.
\newblock Incidence incidents.
\newblock \emph{Eureka (Cambridge)}, 30:\penalty0 22--26, 1967.

\bibitem[de~Grey(2018)]{DeGrey}
A.~D. de~Grey.
\newblock The chromatic number of the plane is at least 5.
\newblock \emph{CoRR}, abs/1804.02385, 2018.

\bibitem[Exoo and Ismailescu(2017)]{ExooIsmailescu}
G.~Exoo and D.~Ismailescu.
\newblock A unit distance graph in the plane with fractional chromatic number
  383/102.
\newblock \emph{Geombinatorics 26}, no. 3:\penalty0 122–127, 2017.

\bibitem[Gr{\"{o}}tschel et~al.(1981)Gr{\"{o}}tschel, Lov{\'{a}}sz, and
  Schrijver]{DBLP:journals/combinatorica/GrotschelLS81}
M.~Gr{\"{o}}tschel, L.~Lov{\'{a}}sz, and A.~Schrijver.
\newblock The ellipsoid method and its consequences in combinatorial
  optimization.
\newblock \emph{Combinatorica}, 1\penalty0 (2):\penalty0 169--197, 1981.
\newblock \doi{10.1007/BF02579273}.
\newblock URL \url{https://doi.org/10.1007/BF02579273}.

\bibitem[Keleti et~al.(2016)Keleti, Matolcsi, de~Oliveira~Filho, and
  Ruzsa]{Keleti}
T.~Keleti, M.~Matolcsi, F.~M. de~Oliveira~Filho, and I.~Z. Ruzsa.
\newblock Better bounds for planar sets avoiding unit distances.
\newblock \emph{Discrete {\&} Computational Geometry}, 55\penalty0
  (3):\penalty0 642--661, 2016.
\newblock ISSN 1432-0444.
\newblock \doi{10.1007/s00454-015-9751-5}.
\newblock URL \url{http://dx.doi.org/10.1007/s00454-015-9751-5}.

\bibitem[Larman and Rogers(1972)]{LarmanRogers}
D.~G. Larman and C.~A. Rogers.
\newblock The realization of distances within sets in {E}uclidean space.
\newblock \emph{Mathematika}, 19:\penalty0 1--24, 1972.
\newblock ISSN 0025-5793.

\bibitem[Moustrou(2017)]{philippe}
P.~Moustrou.
\newblock \emph{{Geometric distance graphs, lattices and polytopes}}.
\newblock {PhD}, {University of Bordeaux}, Dec. 2017.
\newblock URL \url{https://tel.archives-ouvertes.fr/tel-01677284}.

\bibitem[Shannon(1956)]{DBLP:journals/tit/Shannon56}
C.~E. Shannon.
\newblock The zero error capacity of a noisy channel.
\newblock \emph{{IRE} Trans. Information Theory}, 2\penalty0 (3):\penalty0
  8--19, 1956.
\newblock \doi{10.1109/TIT.1956.1056798}.
\newblock URL \url{https://doi.org/10.1109/TIT.1956.1056798}.

\bibitem[Sz{\'e}kely(2002)]{Szekely}
L.~A. Sz{\'e}kely.
\newblock Erd{\H o}s on unit distances and the {S}zemer\'edi-{T}rotter
  theorems.
\newblock In \emph{Paul {E}rd{\H o}s and his mathematics, {II} ({B}udapest,
  1999)}, volume~11 of \emph{Bolyai Soc. Math. Stud.}, pages 649--666. J\'anos
  Bolyai Math. Soc., Budapest, 2002.

\end{thebibliography}
\label{sec:biblio}

\end{document}